\newtheorem{thm}{Theorem}[section]
\newtheorem{lemma}[thm]{Lemma}
\newtheorem{cor}[thm]{Corollary}
\newtheorem{prop}[thm]{Proposition}
\newtheorem{defn}[thm]{Definition}
\newtheorem{example}[thm]{Example}
\newtheorem{remark}[thm]{Remark}
\numberwithin{equation}{section}
\newcommand{\pairing}[2]{\left( #1\, , \, #2 \right)}
\newcommand{\bP}{\mathbb{P}}
\newcommand{\Z}{\mathbb{Z}}
\newcommand{\R}{\mathbb{R}}
\newcommand{\C}{\mathbb{C}}
\newcommand{\cP}{\mathcal{P}}
\newcommand{\CM}{\mathcal{M}}
\newcommand{\cb}{\mathfrak{b}}
\newcommand{\unu}{\underline{\nu}}
\newcommand{\ev}{\mathrm{ev}}
\newcommand{\pt}{\mathrm{pt}}
\newcommand{\sg}{\mathrm{sg}}
\newcommand{\tg}{\mathrm{tg}}
\newcommand{\cX}{\mathcal{X}}
\begin{document}

\title[GS slab and open GW]{Gross-Siebert's slab functions and open GW invariants for toric Calabi-Yau manifolds}

\author[S.-C. Lau]{Siu-Cheong Lau}
\address{Harvard University}
\email{s.lau@math.harvard.edu}

\begin{abstract}
This paper derives an equality between the slab functions in Gross-Siebert program and generating functions of open Gromov-Witten invariants for toric Calabi-Yau manifolds, and thereby confirms a conjecture of Gross-Siebert on symplectic enumerative meaning of slab functions.  The proof is based on the open mirror theorem of Chan-Cho-Lau-Tseng \cite{CCLT13}.  It shows an instance of correspondence between tropical and symplectic geometry in the open sector.
\end{abstract}

\maketitle

\section{Introduction}

The celebrated Gross-Siebert program \cite{GS07,GHK} reconstructs mirrors of algebraic varieties by using toric degenerations and tropical geometry.  It can be regarded as an algebraic version of the SYZ program \cite{SYZ96}, which gives a brilliant way of handling quantum corrections coming from singular strata.  In \cite{GS-torCY}, Gross and Siebert applied their construction to toric Calabi-Yau manifolds and construct their mirrors.  They also discussed enumerative meaning of their slab functions in terms of counting tropical discs.

On the other hand, SYZ construction for toric Calabi-Yau manifolds was carried out by a joint work of the author with Chan and Leung \cite{CLL} using open Gromov-Witten invariants in symplectic geometry.  Moreover, an equality with the Hori-Iqbel-Vafa mirror was proved in \cite{CLT11} for the total space of the canonical line bundle of a toric Fano manifold, and in \cite{CCLT13} for general toric Calabi-Yau orbifolds.  As a result mirror maps can be expressed in terms of open Gromov-Witten invariants, which was called the open mirror theorem.

It is natural to ask whether these two different approaches produce the same mirror for toric Calabi-Yau manifolds.  The aim of this paper is to give an affirmative answer to this question.  In particular, we establish an equality between Gross-Siebert normalized slab functions for toric Calabi-Yau manifolds and the wall-crossing generating function for open Gromov-Witten invariants.  This confirms the conjecture by Gross-Siebert \cite[Conjecture 0.2]{GS07} that their slab functions have an enumerative meaning in terms of counting holomorphic discs bounded by a fiber of a Lagrangian torus fibration.

\begin{thm} \label{thm:main}
For a toric Calabi-Yau manifold, let $f_{\cb,v_j}$ be the Gross-Siebert slab function for a slab $\cb$ and a primitive generator $v_j$ of a ray in the fan.  Let $n_\beta$ denotes the open Gromov-Witten invariants associated to a disc class $\beta \in \pi_2(X,L)$ bounded by a moment-map fiber $L$.  We have the equality
$$f_{\cb,v_j}(q,z) = \sum_{i=1}^m \left(\sum_{\alpha} q^\alpha n_{\beta_i+\alpha}\right) q^{C_i-C_j} z^{v_i-v_j} = \sum_{i=1}^m \exp g_i(\check{q}(q)) \cdot q^{C_i-C_j} z^{v_i-v_j}$$
where $C_i \in H_2(X)$ are some explicit curve classes (see Equation \eqref{eq:C}), $\check{q}(q)$ is the mirror map, and $g_i(\check{q})$ is an explicit hypergeometric series (see Definition \ref{def:g}).  The monomials $z^v$ for a vector $v$ is explained in Section \ref{sect:setup}.
\end{thm}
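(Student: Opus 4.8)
The plan is to establish the two displayed equalities separately. The second equality---between the open Gromov-Witten generating function $\sum_\alpha q^\alpha n_{\beta_i+\alpha}$ and the hypergeometric series $\exp g_i(\check{q}(q))$---is a direct consequence of the open mirror theorem of \cite{CCLT13}. For each basic disc class $\beta_i$, that theorem expresses the generating function of the invariants $n_{\beta_i+\alpha}$, summed over effective curve classes $\alpha$, as the exponential of the explicit hypergeometric function $g_i$ evaluated at the mirror map $\check{q}(q)$. After matching the conventions of \cite{CCLT13} with the setup of Section \ref{sect:setup}, in particular the indexing of the classes $\beta_i$ and the common factors $q^{C_i-C_j}z^{v_i-v_j}$, this equality follows term by term.

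The substantive content therefore lies in the first equality, identifying the Gross-Siebert normalized slab function $f_{\cb,v_j}(q,z)$ with the symplectic generating function. First I would unpack the definition of $f_{\cb,v_j}$ from \cite{GS-torCY}: the slab $\cb$ is a codimension-one cell of the relevant polyhedral decomposition, and the slab function is assembled from the toric boundary data together with the quantum corrections produced by the scattering procedure. The Gross-Siebert normalization divides through by the monomial attached to the vertex $v_j$, so that the leading term is $1$ and the remaining terms are naturally indexed by the rays $v_i$ with monomials $q^{C_i-C_j}z^{v_i-v_j}$---precisely the shape of the right-hand side.

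The key step is then to show that the coefficient of $q^{C_i-C_j}z^{v_i-v_j}$ in the normalized slab function equals $\exp g_i(\check{q}(q))$. For this I would use the explicit description of the Gross-Siebert mirror superpotential for toric Calabi-Yau manifolds together with its relation to the period data of the mirror, writing each normalized coefficient in closed form and recognizing the resulting series as $\exp g_i(\check{q}(q))$. The crux is the common appearance of the mirror map $\check{q}(q)$ on both sides: on the Gross-Siebert side it enters through the normalization that renders the scattering diagram consistent, while on the symplectic side it enters through \cite{CCLT13}, and identifying these two mirror maps is what links the two constructions.

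The main obstacle will be the bookkeeping needed to align the tropical disc counts defining the slab function with the holomorphic disc counts $n_\beta$ on the symplectic side. Rather than attempting a direct disc-by-disc comparison, I expect the cleanest route is to show that both the normalized slab function and the hypergeometric expression are uniquely characterized by the same normalization at the large-volume limit $q=0$ together with the same consistency relations, so that the identification reduces to checking the leading term and the governing recursion term by term.
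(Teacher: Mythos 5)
Your treatment of the second equality is fine: it is exactly the open mirror theorem of \cite{CCLT13} (Theorem \ref{thm:open=g}), as the paper also records. Your last paragraph also lands on the correct high-level strategy for the first equality, which is the paper's strategy: the slab functions are \emph{uniquely determined} by the defining properties in Definition \ref{def:slab}, so it suffices to verify that the wall-crossing generating function $f_j$ of Equation \eqref{eq:f} satisfies all of them. Conditions (1), (2) and (4) are immediate from the shape of $f_j$.

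The genuine gap is that you never identify, let alone verify, the one condition that carries all the content: the normalization condition (3), which says that $\log f_{\cb,v}$ contains no term of the form $a\,q^C$ with $C \in H_2(X)-\{0\}$. This is not a ``normalization at the large-volume limit $q=0$'' nor a ``governing recursion''; it is a constraint at every order in $q$, and checking it for the symplectic $f_j$ is the entire mathematical work of the paper (Lemmas \ref{lem:change}, \ref{lem:C} and Proposition \ref{prop:key}). Concretely: after the coordinate change $\tilde{z}^{\bar v_l} = z^{\bar v_l}\exp\bigl(\sum_i (v_{l,i}-v_{j,i}) g_i\bigr)$ one rewrites $f_j = \exp(g_j(\check q))\cdot\sum_{l} \check{q}^{C_l-C_j}\tilde{z}^{\bar v_l}$; taking logarithms, the $z$-independent part of $\log\bigl(1+\sum_{l\neq j}\check{q}^{C_l-C_j}\tilde{z}^{\bar v_l}\bigr)$ is computed by classifying the relevant curve classes (Lemma \ref{lem:C}) and counting multinomial coefficients, and it comes out to exactly $-g_j(\check q)$, cancelling the prefactor $g_j$. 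Without this combinatorial identity the proof does not close. Your alternative suggestion --- extracting the slab-function coefficients in closed form from ``the Gross-Siebert mirror superpotential and its relation to the period data of the mirror'' --- is not available: the slab functions are only defined implicitly, order by order, through condition (3), so there is no independent closed form to compare against; attempting to import period data would be circular. Likewise, a ``direct disc-by-disc comparison'' of tropical and holomorphic counts is explicitly what the paper avoids (and notes would require foundational tropical-disc theory not yet in place).
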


The second equality in the above theorem is the main result of the joint work \cite{CCLT13} of the author with Chan, Cho and Tseng.  In this paper we will derive the first equality based on the work of \cite{CCLT13} and the combinatorial meaning of hypergeometric series, see the proof of Proposition \ref{prop:key} and Section \ref{thm:main}.   As a consequence, this implies that the mirror constructed using tropical geometry by Gross-Siebert equals to that constructed using symplectic geometry by Chan-Lau-Leung.  This gives a correspondence between tropical and symplectic geometry in the open sector (while the correspondence in the closed sector has been well-studied, see for instance \cite{Mikhalkin,Gross-trop}).  The geometry underlying such a correspondence is very rich and deserves a further study.

As an immediate consequence,

\begin{cor} \label{cor:same}
For toric Calabi-Yau manifolds, the mirror constructed using tropical geometry by Gross-Siebert equals to the mirror constructed using symplectic geometry by Chan-Lau-Leung.
\end{cor}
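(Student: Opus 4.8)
The plan is to recognize both mirrors as being assembled by the same gluing procedure over a common combinatorial base, so that the only possible source of discrepancy is the transition (wall-crossing) data, and then to invoke Theorem \ref{thm:main} to see that these data coincide.

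First I would recall the two constructions in parallel. In the Gross-Siebert program applied to toric Calabi-Yau manifolds \cite{GS-torCY}, the mirror is built from local toric charts indexed by the maximal cones of the fan and glued across the codimension-one cells (slabs) by transition functions assembled from the normalized slab functions $f_{\cb,v_j}$. In the symplectic construction of Chan-Lau-Leung \cite{CLL}, the mirror is obtained by gluing the SYZ charts of a Lagrangian torus fibration across its walls, where the wall-crossing transformations are prescribed by generating functions of the open Gromov-Witten invariants $n_\beta$. Both constructions are carried out over the same base determined by the fan of the given toric Calabi-Yau manifold, and both produce the same local charts before gluing; hence the two mirrors can differ only through their wall-crossing data.

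The second step is to compare this data directly. By Theorem \ref{thm:main}, for every slab $\cb$ and every primitive ray generator $v_j$ the Gross-Siebert slab function satisfies
$$f_{\cb,v_j}(q,z) = \sum_{i=1}^m \left( \sum_{\alpha} q^\alpha n_{\beta_i+\alpha} \right) q^{C_i-C_j} z^{v_i-v_j},$$
whose right-hand side is exactly the open Gromov-Witten wall-crossing function governing the Chan-Lau-Leung gluing. Thus the transition functions attached to each wall agree monomial by monomial. Since both mirrors are produced by gluing identical local charts along the same walls with identical transition functions, the resulting spaces are canonically isomorphic as complex manifolds, which is the assertion of the corollary.

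The main obstacle is concentrated in the first step: one must set up the two a priori different constructions within a single framework so that $f_{\cb,v_j}$ and the open Gromov-Witten generating series are literally the same piece of gluing data, not merely numerically equal series. Concretely, this means matching the normalization conventions---identifying the Gross-Siebert base point $v_j$ and the distinguished curve classes $C_i$ of Equation \eqref{eq:C} with the Chan-Lau-Leung choice of reference chart and symplectic-area weighting, and checking that the monomials $z^{v_i-v_j}$ transform correctly under the two affine structures on the base. Once this bookkeeping is in place, Theorem \ref{thm:main} delivers the equality of transition functions and the corollary follows at once.
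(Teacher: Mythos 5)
Your proposal is correct in its essential step --- everything reduces to Theorem \ref{thm:main} --- but you take a heavier route than the paper, and the ``main obstacle'' you flag is one the paper has already dissolved by its setup. The paper does not compare the two constructions at the level of gluing procedures and transition data at all: it simply observes that the quoted output theorems present both mirrors as explicit hypersurfaces in the \emph{same} ambient space, namely $\check{X}^{\tg} = \{uv = f_{\cb,v_j}(q,z)\}$ from Equation \eqref{eq:X^tg} and $\check{X}^{\sg} = \{uv = f_j(q,z)\}$ from Equation \eqref{eq:X^sg}, both inside $\C^2 \times (\C^\times)^{n-1}$ with the coordinates $z$, the curve classes $C_i$, and the reference index $j$ fixed uniformly in Section \ref{sect:setup}. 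Once Theorem \ref{thm:main} identifies $f_{\cb,v_j}$ with $f_j$, the two loci are literally the same subset, and the corollary is immediate. Your version, which goes back inside the constructions and matches wall-crossing data chart by chart, would require the normalization bookkeeping you describe in your last paragraph and which you do not actually carry out; the paper's formulation makes that bookkeeping unnecessary because the convention-matching is already built into how both output theorems are stated. Your argument is not wrong, but if you insist on the gluing-data framing you owe the reader the verification you only gesture at, whereas citing the two displayed equations closes the proof in one line.
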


\begin{remark}
In \cite{GS-torCY}, Gross-Siebert counted \emph{tropical discs} and trees and explained a conjectural relation with their slab functions.  This paper employs symplectic geometry instead of tropical geometry and expressse slab functions in terms of counting \emph{holomorphic discs}.  The statement and proof is independent of their tropical interpretation.
\end{remark}

\section*{Acknowledgement}

I am grateful to Nai-Chung Conan Leung for encouragement and useful advice during preparation of this paper.  I express my gratitude to Mark Gross and Bernd Siebert for useful explanations of their program, and to my collaborators Kwokwai Chan, Cheol-Hyun Cho, Hansol Hong, Sang-Hyun Kim, Hsian-Hua Tseng and Baosen Wu for their continuous support.  This project is supported by Harvard University.

\section{Setup and notations for toric Calabi-Yau manifolds} \label{sect:setup}

In this section we briefly recall the essential matarials for toric Calabi-Yau manifolds used in this paper.  

Let $N$ be a lattice of rank $n$ and $M$ be the dual lattice.  Fix a primitive vector $\unu \in M$.  Let $\sigma$ be a (closed) lattice polytope of dimension $n-1$ contained in the affine hyperplane $\{v \in N_\R: \unu(v) = 1\}$.  By choosing a lattice point $v$ in the lattice polytope $\sigma$, we have a lattice polytope $\sigma - v$ in the hyperplane $\unu^\perp_\R \subset N_\R$.  Let $\cP$ be a triangulation of $\sigma$ such that each maximal cell is a standard simplex.  Then by taking a cone over this triangulation, we obtain a fan $\Sigma$ supported in $N_\R$.  Then $X=X_\Sigma$ is a toric Calabi-Yau manifold, which means that the anti-canonical divisor $-K_X = \sum_{i=1}^m D_i$ is linearly equivalent to zero.

Let $m$ denote the number of lattice points lying in the polytope $\sigma$, and let $v_1,\ldots,v_m$ be generators of rays of the fan $\Sigma$(which are one-to-one corresponding to the lattice points in $\sigma$).  By reordering $v_i$'s if necessary, we assume $\{v_1,\ldots,v_n\}$ belongs to a cone of the fan.  In particular it forms a basis of $N$. 
Then we can write $v_i = \sum_{l=1}^n v_{i,l} v_l$ for $v_{i,l} \in \Z$, and $v_{i,l} = \delta_{il}$ if $i \in \{1,\ldots,n\}$.

Denote by $D_i$ the toric prime divisor corresponding to $v_i$.  Each toric prime divisor $D_i$ corresponds to a basic disc class $\beta_i \in \pi_2(X,T)$ bounded by a Lagrangian torus fiber $T \subset X$ (see \cite{CO} for detailed discussions on basic disc classes).  We have $\pi_2(X,T) = \Z \langle \beta_1,\ldots,\beta_m\rangle$ and the exact sequence
$$ 0 \to H^2(X) \to \pi_2(X,T) \to N \to 0. $$

Since $X$ is Calabi-Yau, we have $c_1(\alpha) := -K_X \cdot \alpha = \sum_{i=1}^m D_i \cdot \alpha = 0$ for any $\alpha \in H_2^(X)$.  For a disc class $\beta \in \pi_2(X,T)$, its Maslov index is $\mu(\beta) = \sum_{i=1}^m D_i \cdot \beta$ (see \cite{CO}).  In particular the basic disc classes have Maslov index two.

Define the curve classes \begin{equation} \label{eq:C}
C_i := \beta_{i} - \sum_{l=1}^n v_{i,l} \beta_l
\end{equation}
for $i=n+1,\ldots,m$.  Then $\{C_{n+1},\ldots,C_m\}$ forms a basis of $H_2(X)$.  The dual basis of $H^2(X)$ is $\{D_{n+1},\ldots,D_m\}$.  The corresponding K\"ahler parameters and mirror complex parameters are denoted as $q=(q_{n+1},\ldots,q_{m})=(q^{C_{n+1}},\ldots,q^{C_m})$ and $\check{q}=(\check{q}_{n+1},\ldots,\check{q}_{m}) = (\check{q}^{C_{n+1}},\ldots,\check{q}^{C_{m}})$ respectively.  We set $C_1 = \ldots = C_n = 0$, and so $\check{q}_i = q_i = q^{C_i} = 1$ for $i=1,\ldots,n$.

\begin{remark}
In the language of Gross-Siebert program, the choice of the basis $\{C_i:i=n+1,\ldots,m\}$ corresponds to the choice of a set of piecewise linear functions supported on $\Sigma$.  $C_i$ corresponds to the piecewise linear function which takes value $1$ on $v_i$, $-v_{i,l}$ on $v_l$ for $l\in\{1,\ldots,n\}$, and zero on $v_l$ for $l \not\in \{i\} \cup \{1,\ldots,n\}$.
\end{remark}

Now we define the mirror complex variables which are used to define the mirror of the toric Calabi-Yau manifold $X$.
Denote by $\hat{z}^{v_i}$ for $i=1,\ldots,m$ the monomial $\prod_{l=1}^n \hat{z}_l^{v_{i,l}}$ in $n$ variables $\hat{z}_1,\ldots,\hat{z}_n$.  Then $\hat{z}^{v_i} = \hat{z}_i$ for $i=1,\ldots,n$.  This gives a correspondence between lattice points in $N$ and monomials in $n$ variables.  On the other hand, we need a correspondence between lattice points in $N^{\perp \unu}$ and monomials in $n-1$ variables.

Fix $j \in \{1,\ldots,m\}$ and set $\bar{v}_l := v_l - v_j \in N^{\perp \unu}$ for all $l \in \{1,\ldots,m\}$.  Then $\bar{v}_i = \sum_{l=1}^n v_{i,l} \bar{v}_l$ (since $\sum_{l=1}^n v_{i,l} = 1$ by $\unu(v_k) = 1$ for all $k$).  Fix a basis $\{e_1,\ldots,e_{n-1}\}$ of $N^{\perp \unu}$.  Then $e_k = \sum_{l=1}^n e_{k,l} v_l$ for some $e_{k,l} \in \Z$, and we define $z_k = \prod_{l=1}^n z_l^{e_{k,l}}$ for $k=1,\ldots,n-1$.  Since $\bar{v}_l$ can be expressed as an integer combination of $e_k$'s, $\hat{z}^{\bar{v}_l}$ can be regarded as monomials in the $(n-1)$ coordinates $z_k$'s.  $z_k$ for $k=1,\ldots,n-1$ will be used as complex coordinates for the mirror of $X$.

In the next two sections we review the construction of mirror varieties by tropical geometry and symplectic geometry respectively.

\section{Gross-Siebert slab functions}

In \cite{GS07} Gross-Siebert developed a construction of mirror varieties by extracting tropical geometric data from a toric degeneration of a smooth algebraic variety.  In a recent paper \cite{GS-torCY} they applied their construction to toric Calabi-Yau manifolds.  They constructed a toric degeneration of a toric Calabi-Yau manifold, and use it to construct its mirror Calabi-Yau.  The survey paper \cite{GS-invitation} provides an excellent exposition of their program.
  
In the following we will just sketch their construction in a very brief way, and focus on the slab functions and the resulting mirror varieties which are the main subjects of this paper.  Some of the notations here are different from that in \cite{GS-torCY} for convenience of comparing with the SYZ mirror constructed from symplectic geometry discussed in the next section.

The Gross-Siebert program starts with a toric degeneration of an algebraic variety $X$, which roughly speaking is a family of varieties $\cX^t$ where $\cX^t \cong X$ for $t \not= 0$ and $\cX^0$ is a union of toric varieties (of the same dimension) glued along toric strata.  From the intersection complex of the central fiber $\cX^0$, an affine manifold $B$ with singularities $\Delta$ is constructed, together with some initial `walls' which are certain codimension-one strata attached with `slab functions'.  From this initial data, a structure of walls on $B$ can be constructed order-by-order by working on scattering diagrams.  Intuitively by gluing all the connected components of the complement of the walls in $B$ by using the slab functions, one obtain a family of varieties which is defined as the mirror.

In the case of a toric Calabi-Yau manifold, which can be regarded as a local piece of a compact Calabi-Yau variety, the construction simplifies drastically.  $B$ can be identified as $\R^n$ topologically, and there is exactly one wall $H$ which can be identified as $\R^{n-1} \times \{0\}$.  The discriminant locus $\Delta$ is contained in this wall, which can be topologically identified as the dual of the triangulation on the polytope $\sigma \subset N_\R^{\perp \unu} \cong \R^{n-1}$.  $H - \Delta$ consists of several connected components, which have one-to-one correspondence with the lattice points $v_i$ in $\sigma$.

Slabs and slab functions play a key role in constructing the mirror family.  In this case the slabs $\cb$ are identified as the maximal cells in the triangulation $\cP$ of $\sigma$.  Slab functions are elements in $\C[[q_{n+1},\ldots,q_m]][z_1^\pm, \ldots, z_{n-1}^\pm]$ attached to slabs; in this case we have a slab function $f_{\cb,v} \in \C[[q_{n+1},\ldots,q_m]][z_1^\pm, \ldots, z_{n-1}^\pm]$ attached to each pair $(\cb,v)$, where $\cb$ is a slab and $v$ is a vertex of $\cb$.  The slab functions are determined by the following properties.

\begin{defn}[Slab functions] \label{def:slab}
Slab functions are elements in the collection $\{f_{\cb,v}\}_{(\cb,v)}$, where $f_{\cb,v} \in \C[[q]][z_1^\pm, \ldots, z_{n-1}^\pm]$, determined by the following properties:
\begin{enumerate}
\item The constant term of each $f_{\cb,v}$ (as a series in $q$ and $z$) is $1$.
\item If $v_i$ and $v_j$ are adjacent vertices of $\cb$, then
$$ f_{\cb,v_i} = q^{C_j-C_i} z^{v_j-v_i} f_{\cb,v_j}. $$
\item $\log f_{\cb,v}$ has no term of the form $a \cdot q^{C}$ where $a \in \C^\times$ and $C \in H_2(X)-\{0\}$.
\item If $v \in \cb \cap \cb'$, then $f_{\cb,v} = f_{\cb',v}$.  Hence  $f_{\cb,v}$ actually does not depend on $\cb$.
\end{enumerate}
\end{defn}
(3) is the most essential condition.  It is called to be the normalization condition.  Roughly speaking it means $\log f_{\cb,v}$ is counting tropical discs, and hence terms corresponding to tropical curves cannot appear.  From these four conditions one can compute the slab functions order-by-order.

Having the slab functions $f_{\cb',v}$, the outcome of the construction of Gross-Siebert program in this case is the following:

\begin{thm}[\cite{GS-torCY}]
The Gross-Siebert mirror of a toric Calabi-Yau manifold $X$ is
\begin{equation} \label{eq:X^tg}
\check{X}^{\tg} = \{(u,v,z) \in \C^2 \times (\C^\times)^{n-1}: uv = f_{\cb,v_j}(q,z) \}
\end{equation}
for a chosen $j \in \{1,\ldots,m\}$.  The superscript `tg' stands for tropical geometry.
\end{thm}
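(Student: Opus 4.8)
The statement asserts that the general Gross-Siebert machinery of \cite{GS07}, which outputs a formal family glued from toric charts via wall-crossing automorphisms, produces exactly the affine hypersurface \eqref{eq:X^tg} once it is specialized to the toric Calabi-Yau setting recalled above. The plan is therefore to trace through this machinery in the simplified situation in which $B \cong \R^n$ carries the single wall $H \cong \R^{n-1} \times \{0\}$, and to identify the resulting glued space with $\check{X}^{\tg}$.

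First I would fix the integral affine structure on the smooth locus $B_0 = B \setminus \Delta$ and record the two half-spaces $B_+$ and $B_-$ into which $H$ divides a neighborhood of $B_0$. Using the fan obtained as the cone over $\cP$, each half-space determines a toric chart of the mirror whose monomials are indexed by integral tangent directions. The two primitive directions transverse to $H$ (pointing into $B_+$ and into $B_-$) give rise to the two variables $u$ and $v$ of \eqref{eq:X^tg}, while the monomials $\hat{z}^{\bar v_l}$ associated to directions tangent to $H$ give the coordinates $z_1,\dots,z_{n-1}$, exactly as set up in Section \ref{sect:setup}.

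Next I would compute the transformation attached to crossing $H$. In the Gross-Siebert formalism, passing across the wall over the chamber of $H \setminus \Delta$ associated to a vertex $v$ multiplies the crossing monomial by the slab function $f_{\cb,v}$. The monomial naturally associated to the transverse directions is the product $uv$, so its transformation law across $H$ is governed by $f_{\cb,v_j}$, and imposing consistency of the glued family forces the single defining relation $uv = f_{\cb,v_j}(q,z)$. Here property (2) of Definition \ref{def:slab}, $f_{\cb,v_i} = q^{C_j-C_i} z^{v_j-v_i} f_{\cb,v_j}$, is what shows that changing the reference vertex $v_j$ merely rescales $u$, $v$, and the monomials $z^v$ by compatible powers of $q$ and $z$, so that \eqref{eq:X^tg} is independent of $j$; property (4) ensures independence of the slab $\cb$.

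The main obstacle is making the abstract gluing of \cite{GS07} explicit and, above all, verifying that the structure consisting of the single wall $H$ equipped with these slab functions is already \emph{consistent} — that is, that the scattering process generates no further walls to any order in $q$. This is where the normalization condition (3), together with the codimension-two nature of $\Delta$ and the simplicity of the affine monodromy around it, must be used. Establishing consistency and then identifying the glued total space precisely as the hypersurface \eqref{eq:X^tg}, rather than merely as an abstract formal family over $\mathrm{Spec}\,\C[[q]]$, is the technical heart of the argument; this is carried out in \cite{GS-torCY}, and in the present paper the statement is taken as the input from which Theorem \ref{thm:main} is derived.
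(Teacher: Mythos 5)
The paper offers no proof of this theorem: it is imported directly from \cite{GS-torCY}, and the only accompanying observation is that properties (2) and (4) of Definition \ref{def:slab} make the hypersurface \eqref{eq:X^tg} independent of the choice of $j$. Your sketch is consistent with this treatment --- you correctly identify the role of properties (2) and (4) in the $j$-independence and correctly defer the consistency of the single-wall structure and the identification of the glued family with the hypersurface to \cite{GS-torCY} --- so your outline matches the paper's use of the statement as an external input.
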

By Property (2) and (4) of Definition \ref{def:slab}, the varieties for different $j$'s are isomorphic to each other, and so we do not need to worry about the choice of $j$.

\section{Open GW invariants and SYZ mirrors of toric Calabi-Yau manifolds}

In \cite{CLL}, an SYZ construction of the mirror of toric Calabi-Yau manifolds was developed based on symplectic geometry.  This section reviews the construction briefly and states the end result.

The construction starts with a Lagrangian torus fibration of a toric Calabi-Yau manifold, which was constructed by Goldstein \cite{goldstein} and Gross \cite{gross_examples} independently.  The base $B$ of the fibration can be identified topologically as the upper half space $\R^{n-1} \times \R_{\geq -C}$ for a constant $C > 0$.  The codimension-two discriminant locus $\Delta$, like in the construction in the last section, is contained in the hyperplane $H = \R^{n-1} \times \{0\}$ and can be topologically identified as the dual of the triangulation on the polytope $\sigma \subset N_\R^{\perp \unu} \cong H$.  The connected components of $H - \Delta$ have one-to-one correspondence with the lattice points $v_i$ in $\sigma$.

We fix a choice of $j \in \{1,\ldots,m\}$ to define the following contractible open subset of $B$:
$$ U_j := B - \Delta - \cup_{i\not=j} (\textrm{ connected component of $(H-\Delta)$ corresponding to $v_i$}). $$
The torus fibration trivializes over $U_j$, and hence can be identified with $U_j \times N_R/N$.  Then $\pi_2(X,F_r)$ for any fiber $F_r$, $r \in U_j$ can be identified with $\Z^m$.

One essential ingredient of the construction is \emph{open Gromov-Witten invariant} of a Lagrangian fiber, which is roughly speaking counting holomorphic discs bounded by a Lagrangian fiber.  It turns out that open Gromov-Witten invariants of fibers at points above and below the wall $H$ differ drastically (and the invariants are not well-defined for fibers at the wall), and this is known as wall-crossing phenomenon which was studied by \cite{auroux07} in this context.  The invariants for fibers below the wall are only non-trivial for exactly one disc class, while the invariants for fibers above the wall can be identified with that of moment-map fibers.

\begin{defn}[Open Gromov-Witten invariants] \label{def:oGW}
Let $X$ be a toric Calabi-Yau manifold, $T$ a regular moment-map fiber, and $\beta \in \pi_2(X,T)$ a disc class bounded by the fiber $T$.  Let $\CM_1(\beta)$ be the moduli space of stable discs with one boundary marked point representing $\beta$.  The open Gromov-Witten invariant associated to $\beta$ is
$$ n_\beta = \int_{\CM_1(\beta)} \ev^*[\pt] $$
where $\ev:\CM_1(\beta) \to T$ is the evaluation map at the boundary marked point.
\end{defn}
By dimension counting, the open Gromov-Witten invariants are non-zero only when $\beta$ has Maslov index two.  Moreover stable disc classes of a moment-map fiber must take the form $\beta + \alpha \in \pi_2(X,T)$, where $\beta$ is a basic disc class and $\alpha$ is an effective curve class (see \cite{FOOOT}).

Having the open Gromov-Witten invariants $n_\beta$, the outcome of the construction in \cite{CLL} is the following:

\begin{thm} \label{thm:SYZ}
The SYZ mirror of a toric Calabi-Yau manifold is
\begin{equation} \label{eq:X^sg}
\check{X}^{\sg} = \left\{(u,v,z_1, \ldots, z_{n-1}) \in \C^2 \times (\C^\times)^{n-1}: uv = f_j(q,z) \right\},
\end{equation}
for $j \in \{1,\ldots,m\}$ chosen in the construction, where
\begin{equation} \label{eq:f}
f_j(q,z) = \sum_{i=1}^m \left(\sum_{\alpha} q^\alpha n_{\beta_i+\alpha}\right) q^{C_i-C_j} z^{v_i-v_j},
\end{equation}
The superscript `sg' stands for symplectic geometry.
\end{thm}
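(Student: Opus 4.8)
The plan is to follow the SYZ recipe: realize the mirror as a moduli space of pairs consisting of a Lagrangian torus fiber together with a flat $U(1)$-connection, equipped with the complex structure dictated by counting holomorphic discs. First I would fix the Goldstein--Gross Lagrangian fibration $\pi:X\to B$ and trivialize it over the contractible chamber $U_j$, identifying $\pi^{-1}(U_j)\cong U_j\times(N_\R/N)$ and hence $\pi_2(X,F_r)\cong\Z^m$ with basis the basic disc classes $\beta_1,\dots,\beta_m$. Over this chamber the semiflat mirror carries complex coordinates built from the symplectic area $\int_\beta\omega$ of a disc class together with the holonomy of the flat connection around $\partial\beta$; in the absence of instanton corrections these assemble into a first approximation $(\C^\times)^n$ to the mirror.

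The heart of the construction is to incorporate the quantum corrections for fibers lying above the wall $H$. Here I would invoke the Cho--Oh classification \cite{CO} of Maslov index two discs bounded by a moment-map fiber together with the Fukaya--Oh--Ohta--Ono theory \cite{FOOOT} of open Gromov--Witten invariants, so that the complex coordinate attached to the $i$-th toric divisor $D_i$ is corrected to the generating function $\sum_\alpha q^\alpha n_{\beta_i+\alpha}$, weighted by the monomial $q^{C_i}z^{v_i}$ through the exact sequence $0\to H^2(X)\to\pi_2(X,F_r)\to N\to 0$. Summing over $i=1,\dots,m$ produces exactly the series $f_j(q,z)$ of Equation~\eqref{eq:f}.

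Next I would analyze the wall-crossing, following Auroux \cite{auroux07}. The wall $H$ is characterized by fibers bounding a Maslov index zero disc whose boundary is a collapsing circle in $N_\R/N$, and across $H$ the open (Maslov index two) Gromov--Witten invariants jump: below the wall only a single disc class contributes, whereas above the wall the full count assembling into $f_j$ appears. The two holomorphic discs that cap off the collapsing circle from the two sides of $H$ furnish the affine coordinates $u,v\in\C$, and gluing the semiflat charts across the wall forces the relation $uv=f_j(q,z)$, which is precisely the conic bundle of Equation~\eqref{eq:X^sg}. The integer $j$ only records which chamber $U_j$ is used to trivialize the fibration; since the underlying fibration does not depend on this choice, the mirrors for different $j$ are isomorphic.

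The main obstacle is making the wall-crossing rigorous. One must define $n_{\beta_i+\alpha}$ via virtual techniques --- the moduli $\CM_1(\beta)$ of stable discs with one boundary marked point representing the class, virtual fundamental chains, and coherent orientations --- and then prove that $n_\beta$ is invariant under deformations despite the non-compactness of $X$ and the bubbling of Maslov index zero discs exactly at the wall. Equally delicate is verifying that the jump across $H$ is governed by the generating function above with the correct normalization of K\"ahler parameters $q^{C_i-C_j}$, so that the two charts glue to the single equation $uv=f_j$. Establishing this deformation invariance together with the precise wall-crossing transformation is the technical core carried out in \cite{CLL}.
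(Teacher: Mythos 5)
The paper does not prove Theorem \ref{thm:SYZ} at all --- it is quoted as the output of the construction in \cite{CLL}, so there is no internal proof to compare against. Your sketch faithfully reproduces the approach of that reference (semiflat coordinates over the chamber $U_j$, Cho--Oh classification and FOOO invariants for the corrections, Auroux-style wall-crossing producing the conic fibration $uv=f_j$), and you correctly flag that the deformation-invariance and gluing details are the technical content deferred to \cite{CLL}; this is consistent with how the paper itself treats the statement.
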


$f_j$ given in Equation \eqref{eq:f} is called to be the wall-crossing generating function of open Gromov-Witten invariants because it describes the change in generating function of open invariants when crossing the wall $H$.

\section{Open mirror theorem}

In \cite{CCLT13} an equality between open Gromov-Witten invariants and mirror map was derived, which is a main ingredient in our proof of Theorem \ref{thm:main}.  In this section we recall the result, which was called the open mirror theorem.

The main object of study of the open mirror theorem is the generating function of open Gromov-Witten invariants (see Definition \ref{def:oGW}).

\begin{defn}[Generating function of open Gromov-Witten invariants] \label{def:delta}
Let $X$ be a toric Calabi-Yau manifold.  The generating function of open Gromov-Witten invariants associated to a toric divisor $D_i$ is defined to be
$ \sum_{\alpha} q^\alpha n_{\beta_i + \alpha}$ which takes the form $1 + \delta_i (q)$.  The summation is over all curve classes $\alpha$.
\end{defn}

Mirror map plays a central role in mirror symmetry.  It gives a canonical local isomorphism between the K\"ahler moduli and the mirror complex moduli.  In our context, the generating functions defined above are expressed in terms of the mirror map.

\begin{defn}[Mirror map] \label{def:g}
For $j=1,\ldots,m$, define the hypergeometric functions
\begin{equation}\label{eq:g}
g_j(\check{q}):=\sum_{d}\frac{(-1)^{(D_j\cdot d)}(-(D_j\cdot d)-1)!}{\prod_{p\neq j} (D_p\cdot d)!}\check{q}^d
\end{equation}
where the summation is over all effective curve classes $d\in H_2^\text{eff}(X)$ satisfying
$$-K_X\cdot d=0, D_j\cdot d<0 \text{ and } D_p\cdot d \geq 0 \text{ for all } p\neq j.$$

Then the mirror map is defined as
$$ q_l = \check{q}_l \exp \left(-\sum_{k=1}^{m} \pairing{D_k}{C_l} g_l(\check{q})\right). $$
\end{defn}

Now we are prepared to state the open mirror theorem:

\begin{thm}[Open mirror theorem \cite{CCLT13}] \label{thm:open=g}
For a toric Calabi-Yau manifold,
$$ \sum_{\alpha} q^\alpha n_{\beta_i + \alpha} = \exp g_i(\check{q}(q)) $$
where $\check{q}(q)$ is the inverse of the mirror map in Definition \ref{def:g}.
\end{thm}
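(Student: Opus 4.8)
The plan is to reduce the open generating function $\sum_\alpha q^\alpha n_{\beta_i+\alpha}$ to a quantity in the closed sector and then invoke the toric mirror theorem of Givental and Lian--Liu--Yau. The left-hand side is the generating function $1+\delta_i(q)$ of Definition \ref{def:delta}; by dimension counting (Definition \ref{def:oGW}) only Maslov-index-two classes $\beta_i+\alpha$ contribute, so each nonzero term counts a basic disc in class $\beta_i$ together with a ``sprouting'' effective curve class $\alpha$. The first concrete step is to fix the combinatorial data of Section \ref{sect:setup}: the identification $\pi_2(X,T)=\Z\langle\beta_1,\ldots,\beta_m\rangle$, the exact sequence $0\to H^2(X)\to\pi_2(X,T)\to N\to 0$, and the curve classes $C_i$, so that the K\"ahler parameters $q$ and the hypergeometric data are available in a single bookkeeping on both sides of the claimed identity.

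The central step is an \emph{open--closed reduction} expressing $n_{\beta_i+\alpha}$ through genus-zero closed Gromov--Witten invariants of $X$. I would attempt this geometrically: compactify the toric Calabi--Yau $X$ to a semi-Fano toric manifold $\bar X$ in which the moment-map fiber $T$ remains Lagrangian, and exploit the fact that a holomorphic disc in class $\beta_i+\alpha$ meets the toric divisor $D_i$ once and can be capped off there to produce a rational curve in $\bar X$. Tracking the virtual fundamental class on $\CM_1(\beta_i+\alpha)$ under this capping, together with the Cho--Oh and Fukaya--Oh--Ohta--Ono description of the basic discs, should identify the disc generating function with the component of Givental's $J$-function (equivalently, the one-point genus-zero potential with a point insertion) attached to the divisor class $D_i$. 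Since $X$ is non-compact, one works with the local/equivariant theory and passes to the Calabi--Yau limit; the wall-crossing across $H$ is absorbed into the choice of $j$, consistently with Theorem \ref{thm:SYZ}.

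Having landed in the closed sector, I would apply the toric mirror theorem: the $J$-function of $X$ equals the explicit hypergeometric $I$-function after the change of coordinates given by the mirror map $\check q(q)$ of Definition \ref{def:g}. The $I$-function is a sum over effective classes $d$ with Euler-class coefficients, and the summand attached to $D_i$ carries exactly the factor
$$\frac{(-1)^{(D_i\cdot d)}(-(D_i\cdot d)-1)!}{\prod_{p\neq i}(D_p\cdot d)!}$$
over the range $-K_X\cdot d=0$, $D_i\cdot d<0$, $D_p\cdot d\geq 0$. Matching this component term-by-term with the disc generating function produces the series $g_i(\check q)$ of Equation \eqref{eq:g}, while the multiplicative (disconnected) structure of the disc contributions, reflected in the relation $\log(1+\delta_i)=g_i$, yields the exponential $\exp g_i(\check q(q))$. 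Inverting the mirror map $q=q(\check q)$ then rewrites the identity in the K\"ahler variables $q$, giving the stated form.

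The hard part will be the open--closed reduction of the second paragraph: rigorously relating the virtual disc counts to closed invariants requires controlling the virtual fundamental class on the non-compact disc moduli, justifying the capping or degeneration that converts discs bounded by $T$ into rational curves in $\bar X$, and confirming that the equivariant contributions survive the Calabi--Yau limit. Equally delicate is bookkeeping the exponential structure, namely showing that the connected disc contributions assemble precisely into $g_i$ while the multiple-cover and comb configurations exponentiate, so that the \emph{logarithm} of the open generating function matches the hypergeometric series rather than the full $I$-function component.
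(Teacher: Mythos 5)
This theorem is not proved in the paper at all: it is imported from \cite{CCLT13}, and the paper's entire ``proof'' is the one-paragraph strategy description following the statement. Your outline reproduces that strategy --- identify the open generating function $\sum_\alpha q^\alpha n_{\beta_i+\alpha}$ with closed-sector invariants of a toric compactification depending on $\beta_i$, then invoke the closed toric mirror theorem and the mirror map --- so at the level of approach you are aligned with the cited source. Two caveats are worth recording. First, the step you yourself flag as the hard part (the open--closed reduction, i.e.\ controlling the virtual class under capping/degeneration and showing the connected disc contributions assemble into $g_i$ while the rest exponentiates) is precisely the mathematical content of \cite{CCLT13}; since you leave it unexecuted, what you have is a strategy outline rather than a proof, which is acceptable here only because the present paper likewise treats the result as a black box. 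Second, your plan to compactify $X$ to a smooth semi-Fano toric \emph{manifold} is generally not available: as the paper explicitly notes, the natural compactification attached to $\beta_i$ typically carries orbifold strata, so the closed-sector input must be the orbifold version of the toric mirror theorem and the argument is properly carried out in the category of toric Calabi--Yau orbifolds, even when the final statement is phrased for manifolds.
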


The proof involves identification of the generating functions of open invariants with certain generating functions of closed invariants of a toric compactification of $X$, which depends on the basic disc class $\beta_i$.  Then the closed invariants can be expressed in terms of hypergeometric functions and mirror maps by using the closed toric mirror theorem \cite{givental98,LLY1,LLY2,LLY3}.  Note that the compactification may contain orbifold strata, and so a natural class to study for this purpose is toric Calabi-Yau orbifolds.  Nevertheless, the theorem itself can be stated within the manifold setting.

\begin{remark}
Similar result holds for toric semi-Fano manifolds \cite{CLLT12}, in which the proof involves Seidel representation and degeneration techniques.  Open mirror theorem is useful for studying global mirror symmetry, as illustrated in \cite{CCLT12,L13}.
\end{remark}

\section{Proof of Theorem \ref{thm:main}}
We are now ready to prove the main Theorem \ref{thm:main}. The strategy is the following.  First wall-crossing generating function of open Gromov-Witten invariants $f_i$ can be expressed in terms of hypergeometric functions and mirror maps by using Theorem \ref{thm:open=g} (the main result of \cite{CCLT13}).  Then we prove that it satisfies the defining properties of slab functions by observing a combinatorial meaning of hypergeometric functions (as counting tropical curves, roughly speaking), and hence the equality between $f_i$ (see Equation \eqref{eq:f}) and $f_{\cb,v_i}$ (see Definition \ref{def:slab}) follows.  This implies Gross-Siebert mirror constructed from tropical geometry equals to the SYZ mirror constructed from symplectic geometry for toric Calabi-Yau manifolds.

First we do a change of coordinates in $z$, such that $f_j$ in the new coordinates has an explicit closed expression in terms of the mirror complex parameters $\check{q}$ (rather than $q$):
\begin{lemma} \label{lem:change}
Let $f_j$ be the wall-crossing generating function defined by Equation \eqref{eq:f}.  There exists a change of coordinates $z(\tilde{z})$ such that
$$ f_j(q,z(\tilde{z})) = \exp(g_j(\check{q}(q))) \cdot \sum_{l=1}^m \check{q}^{C_l-C_j}(q) \cdot \tilde{z}^{\bar{v}_l}. $$
\end{lemma}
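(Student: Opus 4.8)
The plan is to begin from the open mirror theorem (Theorem \ref{thm:open=g}), which lets me replace each open generating function by its hypergeometric form and rewrite
$$ f_j(q,z) = \sum_{i=1}^m \exp\bigl(g_i(\check q(q))\bigr)\, q^{C_i-C_j}\, z^{v_i-v_j}. $$
The whole content of the lemma is then to transfer the $q$-dependence carried by the monomials $q^{C_i-C_j}$ into $\check q$-dependence by substituting the mirror map of Definition \ref{def:g}, and to show that the residual hypergeometric factors collapse into a single common factor $\exp(g_j)$ times a monomial rescaling of the $z$-coordinates. So the proof is essentially a bookkeeping computation with the intersection pairing $D_k\cdot C_i$.

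Concretely, I would first record the intersection numbers. From the definition $C_i = \beta_i - \sum_{l=1}^n v_{i,l}\beta_l$, the standard identity $D_k\cdot\beta_l = \delta_{kl}$, and the fact that $\{D_{n+1},\dots,D_m\}$ is dual to $\{C_{n+1},\dots,C_m\}$, one obtains the uniform relation
$$ \sum_{k=1}^m (D_k\cdot C_i)\, g_k = g_i - \sum_{k=1}^n v_{i,k}\, g_k, $$
valid for every $i$ (both sides vanish when $i\le n$ since $C_i=0$ and $v_{i,k}=\delta_{ik}$). Feeding $q^{C_i-C_j}$ through the mirror map of Definition \ref{def:g} therefore gives
$$ q^{C_i-C_j} = \check q^{C_i-C_j}\,\exp\!\Bigl(-(g_i-g_j) + \sum_{k=1}^n (v_{i,k}-v_{j,k})\, g_k\Bigr). $$
Substituting this into the formula for $f_j$, the product $\exp(g_i)\exp(-(g_i-g_j)) = \exp(g_j)$ is independent of $i$ and factors out, leaving
$$ f_j = \exp(g_j)\sum_{i=1}^m \check q^{C_i-C_j}\,\exp\!\Bigl(\sum_{k=1}^n (v_{i,k}-v_{j,k})\, g_k\Bigr) z^{v_i-v_j}. $$

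The last step is to recognize the residual factor as a coordinate change. Expanding the monomials in the $n$ variables $\hat z_1,\dots,\hat z_n$, so that $z^{v_i-v_j} = \prod_{l=1}^n \hat z_l^{\,v_{i,l}-v_{j,l}}$, the combination $\exp\!\bigl(\sum_k (v_{i,k}-v_{j,k}) g_k\bigr)\, z^{v_i-v_j}$ equals $\prod_{l=1}^n \bigl(\exp(g_l)\,\hat z_l\bigr)^{v_{i,l}-v_{j,l}}$. Thus defining the new coordinates at the level of the $n$ variables by $\hat z_l = \exp(-g_l)\,\hat{\tilde z}_l$ (equivalently $\tilde z_k = \exp(\sum_l e_{k,l}\, g_l)\, z_k$ in the genuine coordinates $z_k = \hat z^{e_k}$) turns each summand into $\check q^{C_i-C_j}\,\tilde z^{v_i-v_j} = \check q^{C_i-C_j}\,\tilde z^{\bar v_i}$, which is exactly the claimed expression since $\bar v_l = v_l - v_j$.

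The step I expect to be the real content, rather than the index-chasing, is verifying that this rescaling is a legitimate change of coordinates on the $(n-1)$-torus $(\C^\times)^{n-1}$ and not merely a formal manipulation in $n$ variables. Here the Calabi-Yau hypothesis is what makes things work: because $\unu(v_k)=1$ for all $k$, every exponent vector satisfies $\sum_{l=1}^n (v_{i,l}-v_{j,l}) = 0$, so only characters $\bar v\in N^{\perp\unu}$ occur, and the diagonal rescaling $\hat z_l\mapsto \exp(g_l)\hat z_l$ of $(\C^\times)^n$ descends to a well-defined automorphism of the quotient torus $N^{\perp\unu}\otimes\C^\times$. Moreover each $g_l$ has vanishing constant term, since the defining sum in Definition \ref{def:g} omits $d=0$, so $\exp(g_l)$ is a unit in $\C[[q]]$ and the substitution is invertible. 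Assembling these observations gives the stated closed-form expression for $f_j$ in the $\tilde z$-coordinates.
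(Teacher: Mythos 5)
Your proof is correct and follows essentially the same route as the paper's: both hinge on the open mirror theorem, the intersection-number identity $\sum_k (D_k\cdot C_i)g_k = g_i - \sum_{k=1}^n v_{i,k}g_k$ feeding the mirror map, and the diagonal rescaling $\hat{\tilde z}_l = \hat z_l\exp(g_l)$; you merely run the computation from $f_j$ toward the closed form rather than the reverse. Your added checks that the rescaling descends to the $(n-1)$-torus (via $\unu(v_k)=1$) and is invertible (since each $g_l$ has no constant term) are points the paper leaves implicit, but they do not change the argument.
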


\begin{proof}
Take the change of coordinates $\widetilde{\hat{z}_l} = \hat{z}_l \exp g_l(\check{q}(q))$ for $l=1,\ldots,n$.  Then $\widetilde{\hat{z}}^{v} = \hat{z}^v \exp \left( \sum_{i=1}^n a_i g_i(\check{q}(q)\right)$ for $v = \sum_{i=1}^n a_i v_i \in N$.  In particular this gives a change of coordinates for $z_i = \hat{z}^{e_i}$, $i=1,\ldots,n-1$.

Then for $l=1,\ldots,m$, 
$$\tilde{z}^{\bar{v}_l} = \widetilde{\hat{z}}^{v_l - v_j} = \hat{z}^{v_l-v_j} \exp \left( \sum_{i=1}^n (v_{l,i} - v_{j,i}) g_i(\check{q}(q))\right) = z^{\bar{v}_l} \exp \left( \sum_{i=1}^n (v_{l,i} - v_{j,i}) g_i(\check{q}(q))\right).  $$
Hence 
\begin{align*}
\exp(g_j(\check{q}(q))) \cdot \sum_{l=1}^m \check{q}^{C_l-C_j}(q) \cdot \tilde{z}^{\bar{v}_l} &= \exp(g_j(\check{q}(q))) \cdot \sum_{l=1}^m \check{q}^{C_l-C_j}(q) \cdot z^{\bar{v}_l} \exp \left( \sum_{i=1}^n (v_{l,i} - v_{j,i}) g_i(\check{q}(q)) \right) \\
&= \exp(g_j(\check{q}(q))) \cdot \sum_{l=1}^m q^{C_l-C_j}(q) \exp(g_l(\check{q}(q)) - g_j(\check{q}(q))) \cdot z^{\bar{v}_l} \\
&= \sum_{l=1}^m \left(q^{C_l-C_j}(q) \exp g_l(\check{q}(q))\right) z^{\bar{v}_l}\\
&= f_j(q,z)
\end{align*}
where the second equality follows from the definition of the mirror map
$$\check{q}^{C_l-C_j}(q) = q^{C_l-C_j} \exp\left(g_l - g_j - \sum_{i=1}^n (v_{l,i} - v_{j,i}) g_i(\check{q}(q))\right)$$
and the last equality follows from Theorem \ref{thm:open=g} that $1 + \delta_l(q) = \exp g_l(\check{q}(q))$.
\end{proof}

Now we classify the curve classes appeared in the hypergeometric functions $g_i$ (see Equation \eqref{eq:g}).
\begin{lemma} \label{lem:C}
For $C \in H_2(X)$, $C \cdot D_j < 0$ and $C \cdot D_l \geq 0$ for all $l \not= j$ if and only if
$$C = \sum_{\substack{i=1\\i\not=j}}^m a_i (C_i - C_j)$$
for some $a_i \geq 0$, $i \in \{1,\ldots,m\} - \{j\}$ satisfying $\sum_{i\not=j} a_i \bar{v}_i = 0$.  Here $\beta_i \in H_2(X,T)$ denotes the basic disc class corresponding to the toric divisor $D_i$ of $X$.
\end{lemma}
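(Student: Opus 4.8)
The plan is to translate both the hypotheses and the conclusion into the single coordinate system provided by the basic disc classes, after which the statement becomes elementary linear algebra. First I would use the exact sequence $0 \to H^2(X) \to \pi_2(X,T) \to N \to 0$ to regard $C \in H_2(X)$ as a vector $C = \sum_{p=1}^m c_p \beta_p$ with $c_p \in \Z$, so that membership in $H_2(X)$ is exactly the relation $\sum_{p=1}^m c_p v_p = 0$ in $N$. Applying $\unu$ to this relation and using $\unu(v_p) = 1$ for every $p$ shows that the Calabi--Yau relation $\sum_{p=1}^m c_p = 0$ holds automatically. Since $\beta_p \cdot D_q = \delta_{pq}$ for basic disc classes, the intersection numbers are simply the coordinates, $C \cdot D_q = c_q$. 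Hence the two sign hypotheses $C \cdot D_j < 0$ and $C \cdot D_l \geq 0$ for $l \neq j$ say precisely that $c_j < 0$ and $c_l \geq 0$ for all $l \neq j$.

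The core computation is to expand the proposed expression in these coordinates. Writing $C_i = \beta_i - \sum_{l=1}^n v_{i,l}\beta_l$ uniformly for all $i$ (valid because $v_{i,l} = \delta_{il}$ forces $C_i = 0$ when $i \leq n$), I would expand
$$\sum_{i\neq j} a_i (C_i - C_j) = \sum_{i\neq j} a_i \beta_i - \Big(\sum_{i\neq j} a_i\Big)\beta_j - \sum_{l=1}^n \Big(\sum_{i\neq j} a_i (v_{i,l}-v_{j,l})\Big)\beta_l.$$
The point is that the linear constraint $\sum_{i\neq j} a_i \bar{v}_i = 0$ is, after expanding $\bar{v}_i = v_i - v_j = \sum_{l=1}^n (v_{i,l}-v_{j,l}) v_l$ in the basis $\{v_1,\dots,v_n\}$, equivalent to the vanishing $\sum_{i\neq j} a_i (v_{i,l}-v_{j,l}) = 0$ for every $l$. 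Thus under this constraint the final (correction) sum disappears, and the coordinate vector of $\sum_{i\neq j} a_i(C_i - C_j)$ is simply $c_i = a_i$ for $i \neq j$ and $c_j = -\sum_{i\neq j} a_i$.

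With this dictionary both implications are immediate. For the ``only if'' direction I would set $a_i := c_i \geq 0$ for $i \neq j$; the constraint $\sum_{i\neq j} a_i \bar{v}_i = 0$ reduces, via $\sum_p c_p v_p = 0$ together with $\sum_p c_p = 0$, to an identity, and the coordinate computation above then recovers $C$ exactly (the $j$-th coordinate matches because $-\sum_{i\neq j} c_i = c_j$). For the ``if'' direction, nonnegativity of the $a_i$ together with the constraint forces $c_l = a_l \geq 0$ for $l \neq j$ and $c_j = -\sum_{i\neq j} a_i \leq 0$. I expect the only point needing care to be the strictness at $j$: one has $c_j < 0$ precisely when some $a_i > 0$, i.e.\ exactly when $C \neq 0$, which is automatic under the hypothesis $C \cdot D_j < 0$. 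Tracking this equivalence, rather than any hard computation, is the main thing to get right.
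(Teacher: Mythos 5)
Your proposal is correct and follows essentially the same route as the paper: both arguments work in the basic disc class basis, use $\partial C=0$ together with $\unu(v_p)=1$ to identify the $j$-th coefficient as $-\sum_{i\neq j}a_i$, and use the balancing constraint $\sum_{i\neq j}a_i\bar v_i=0$ to kill the correction term when passing between $\beta_i-\beta_j$ and $C_i-C_j$. Your explicit remark about the $C=0$ edge case (strictness of $C\cdot D_j<0$ requires some $a_i>0$) is a small point the paper's converse glosses over, but it does not change the substance.
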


\begin{proof}
If $C \cdot D_j < 0$ and $C \cdot D_l \geq 0$ for all $l \not= j$, then $C = \sum_{i=1}^m a_i \beta_i$ for $a_i \geq 0$ for $i\not=j$ and $a_j < 0$.  Moreover since $C \in H_2(X)$, $\partial C = \sum_{i=1}^m a_i v_i = 0$.  Then $a_j v_j = -\sum_{i\not=j} a_i v_i$, and so $a_j = a_j \unu(v_j) = -\sum_{i\not=j} a_i \unu(v_i) = -\sum_{i\not=j} a_i$.  Hence $C = \sum_{i\not=j} a_i (\beta_i - \beta_j)$, and $\sum_{i\not=j} a_i (v_i - v_j) = 0$.  Now $C_l = \beta_l - \sum_{k=1}^n v_{l,k} \beta_k$ for any $l$.  Thus $\beta_i - \beta_j = C_i - C_j + \sum_{k=1}^n (v_{i,k} - v_{j,k}) \beta_k$ and
$$ C = \sum_{i\not=j} a_i (\beta_i - \beta_j) = \sum_{i\not=j}^m a_i (C_i - C_j) + \sum_{k=1}^n \left(\sum_{i\not=j} a_i (v_{i,k} - v_{j,k})\right) \beta_k = \sum_{i\not=j} a_i (C_i - C_j).$$

Conversely, if $C = \sum_{i\not=j}^m a_i (C_i - C_j) \in H_2(X)$ for some $a_i \geq 0$, $i \in \{1,\ldots,m\} - \{j\}$ satisfying $\sum_{i\not=j} a_i \bar{v}_i = 0$, then $C = \sum_{i\not=j} a_i (\beta_i - \beta_j)$.  Thus $C \cdot D_j < 0$ and $C \cdot D_l \geq 0$.
\end{proof}

Using the above two lemmas, we can then prove that $f_j$ satisfies the normalization condition of slab functions.
\begin{prop} \label{prop:key}
$\log f_j(q,z)$ has no term of the form $a\, q^{C}$ where $a \in \C$ and $C \in H_2(X)-\{0\}$.
\end{prop}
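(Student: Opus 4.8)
The plan is to pass to the coordinates $\tilde z$ of Lemma \ref{lem:change}, where $f_j$ has a clean closed form, and then to exhibit a term-by-term cancellation between the hypergeometric series $g_j$ and the $\tilde z$-independent part of a logarithmic expansion. The first reduction is a grading observation: the change of coordinates $z(\tilde z)$ of Lemma \ref{lem:change} multiplies each monomial $z^v$ (for $v \in N^{\perp\unu}$) by a function of $q$ alone, hence preserves the grading of $\C[[q]][z_1^\pm,\ldots,z_{n-1}^\pm]$ by the $z$-weight $v$. Consequently the coefficient of $z^0$ in $\log f_j(q,z)$ equals the coefficient of $\tilde z^0$ in $\log f_j(q,z(\tilde z))$. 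Since a term $a\,q^C$ is exactly a contribution to the $z^0$-part, it suffices to show that the $\tilde z^0$-part (the part independent of $\tilde z$) of $\log f_j(q,z(\tilde z))$ carries no monomial $q^C$ with $C\neq 0$; I will in fact show it vanishes identically.

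By Lemma \ref{lem:change}, using $\bar v_j = 0$ and $C_j - C_j = 0$ to split off the $l=j$ summand,
\begin{equation*}
\log f_j(q,z(\tilde z)) = g_j(\check q(q)) + \log\Bigl(1 + \sum_{l \neq j} \check q^{C_l - C_j} \tilde z^{\bar v_l}\Bigr).
\end{equation*}
Writing $W = \sum_{l\neq j}\check q^{C_l-C_j}\tilde z^{\bar v_l}$ and expanding $\log(1+W)=\sum_{k\geq 1}\frac{(-1)^{k-1}}{k}W^k$, the $\tilde z^0$-part collects exactly the ordered tuples $(l_1,\ldots,l_k)$ with $l_s\neq j$ and $\sum_s \bar v_{l_s}=0$. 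Grouping such a tuple by the multiplicity $a_i$ of each index $i$, the number of orderings is the multinomial coefficient $A!/\prod_i a_i!$ with $A=\sum_i a_i=k$, and the constraint becomes $\sum_{i\neq j} a_i \bar v_i = 0$. Hence the $\tilde z^0$-part of $\log(1+W)$ equals
\begin{equation*}
\sum_{\substack{(a_i)_{i\neq j}\geq 0,\ \text{not all }0\\ \sum_{i\neq j} a_i \bar v_i = 0}} \frac{(-1)^{A-1}(A-1)!}{\prod_{i\neq j} a_i!}\prod_{i\neq j}\bigl(\check q^{C_i-C_j}\bigr)^{a_i}.
\end{equation*}

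The final step is to match this against $g_j(\check q)$. By Lemma \ref{lem:C} the tuples $(a_i)$ above are in bijection with the curve classes $d$ in the defining sum \eqref{eq:g} of $g_j$, via $d=\sum_{i\neq j}a_i(C_i-C_j)$, under which the intersection pairings give $D_p\cdot d=a_p$ for $p\neq j$ and $D_j\cdot d=-A$; this bijection also shows each $d$ arises from a single tuple, so both series are well defined in the $q$-adic completion. Substituting these values into \eqref{eq:g} turns the coefficient of $\check q^d$ in $g_j$ into $(-1)^A(A-1)!/\prod_{i\neq j}a_i!$, which is the negative of the coefficient just computed since $(-1)^A=-(-1)^{A-1}$. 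Thus $g_j(\check q)$ and the $\tilde z^0$-part of $\log(1+W)$ cancel term by term, the $\tilde z^0$-part of $\log f_j(q,z(\tilde z))$ is $0$, and the Proposition follows. The step I expect to be the main obstacle is precisely this combinatorial identification: one must check that the multinomial bookkeeping in the logarithm reproduces exactly the factorial and sign structure of the hypergeometric coefficients in \eqref{eq:g}, and that Lemma \ref{lem:C} upgrades the reindexing by curve classes to a genuine bijection, so that the two formal series may be compared coefficient by coefficient.
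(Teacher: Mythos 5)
Your proposal is correct and follows essentially the same route as the paper: reduce via Lemma \ref{lem:change} to the closed-form expression in the $\tilde z$ coordinates, expand $\log(1+W)$, count the $\tilde z$-independent contributions by multinomial coefficients, and cancel them term by term against $g_j$ using the classification of curve classes in Lemma \ref{lem:C}. Your explicit justification that the coordinate change preserves the $z$-grading, and that each curve class $d$ arises from a unique exponent tuple $(a_i)$, makes precise two points the paper leaves implicit, but the argument is the same.
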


\begin{proof}
By Lemma \ref{lem:change}, $f_j(q,z(\tilde{z}))$ can be written as $\exp(g_j(\check{q}(q))) \cdot \sum_{l=1}^m \check{q}^{C_l-C_j}(q) \cdot z^{\bar{v}_l}$ up to a change of coordinates in $z$.  Since a change of coordinates in $z$ does not affect whether $\log f_j(q,z)$ has term of the form $a\, q^{C}$ or not, it suffices to prove that $\log$ of this expression has no term of the form $a\, q^{C}$.

We will prove that the part which has no $z$ dependence of 
$$ \log \left( 1 + \sum_{\substack{l=1\\l\not=j}}^m \check{q}^{C_l-C_j} \cdot z^{\bar{v}_l} \right) = \sum_{p>0} \frac{(-1)^{p-1}}{p} \left(\sum_{\substack{l=1\\l\not=j}}^m \check{q}^{C_l-C_j} \cdot z^{\bar{v}_l} \right)^p$$
is $-g_j(\check{q})$, and hence
$$ \log \left(\exp(g_j(\check{q})) \cdot \sum_{l=1}^m \check{q}^{C_l-C_j} \cdot z^{\bar{v}_l}\right) = g_j(\check{q}) + \log \left(\sum_{l=1}^m \check{q}^{C_l-C_j} \cdot z^{\bar{v}_l}\right)$$
has no term of the form $c \cdot \check{q}^C$.  Taking the inverse mirror map $\check{q}(q)$ (which does not affect whether the expression has terms with no $z$-dependence or not), we obtain the required statement.

A term with no $z$-dependence in $\left(\sum_{\substack{l=1\\l\not=j}}^m \check{q}^{C_l-C_j} \cdot z^{\bar{v}_l} \right)^p$ takes the form $c \cdot \check{q}^{\sum_{l\not=j} a_l (C_l-C_j)}$ where $\sum_{l\not=j} a_l \bar{v}_l = 0$.  By Lemma \ref{lem:C}, it corresponds to
$$C = \sum_{l\not=j} a_l (C_l-C_j)$$
with the property that $C \cdot D_j < 0$ and $a_i = C \cdot D_i \geq 0$ for all $i \not=j$.   Moreover $p = - D_j \cdot C = \sum_{l\not=j} a_l$.  

Here comes the key: $\check{q}^C$ appears $\frac{(-C \cdot D_j)!}{\prod_{l\not=j} (C \cdot D_l) !}$ times in the expansion of $\left(\sum_{\substack{l=1\\l\not=j}}^m \check{q}^{C_l-C_j} \cdot z^{\bar{v}_l} \right)^p$.  Hence the part with no $z$-dependence of
$$ \sum_{p>0} \frac{(-1)^{p-1}}{p} \left(\sum_{\substack{l=1\\l\not=j}}^m \check{q}^{C_l-C_j} \cdot z^{\bar{v}_l} \right)^p$$
is
$$ \sum_{\substack{C \cdot D_j < 0\\C \cdot D_l \geq 0}} \frac{(-1)^{(-C \cdot D_j)-1}}{(-C \cdot D_j)} \frac{(-C \cdot D_j)!}{\prod_{l\not=j} (C \cdot D_l)!} \check{q}^C = -g_j(\check{q}).$$
\end{proof}

\begin{remark}
Note that the part with no $z$-dependence of
$ \sum_{p>0} \frac{(-1)^{p-1}}{p} \left(\sum_{\substack{l=1\\l\not=j}}^m \check{q}^{C_l-C_j} \cdot z^{\bar{v}_l} \right)^p$ has a natural tropical meaning: it is a counting of tropical curves formed by unions of tropical discs corresponding to some vectors $\bar{v}_l$, and the requirement of no $z$-dependence is the balancing condition on $\bar{v}_l$'s.  This gives a tropical meaning of the hypergeometric functions $g_j$'s.  In order to make this intuitive interpretation rigorous, more foundational theory on counting tropical discs (and its relation with counting tropical curves) is needed.  Nevertheless our proof does not rely on this tropical interpretation.
\end{remark}

The remaining three conditions of being slab functions simply follow from the definition of $f_j$, and hence $f_j$ is a slab function.

\begin{prop}
If we set $f_{\cb,v_i} = f_i$ for all $i = 1,\ldots,m$ and $\cb \ni v_i$, then $f_{\cb,v_i}$ satisfies the defining properties of slab functions in Definition \ref{def:slab}.
\end{prop}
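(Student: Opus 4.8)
The plan is to verify the four defining properties of slab functions from Definition~\ref{def:slab} one by one for $f_{\cb,v_i} := f_i$, where $f_i$ is the wall-crossing generating function of Equation~\eqref{eq:f}. Property~(3), the normalization condition, is precisely the content of Proposition~\ref{prop:key}, so it is already done and I would simply invoke it. The remaining three are essentially formal consequences of the explicit form of $f_i$, and I expect none of them to present real difficulty.

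For property~(1), I would observe directly from Equation~\eqref{eq:f} that the constant term (in both $q$ and $z$) comes only from the summand $i$ with $C_i - C_j = 0$ and $v_i - v_j = 0$, together with the leading $1$ of the open GW generating function $\sum_\alpha q^\alpha n_{\beta_i+\alpha} = 1 + \delta_i(q)$ from Definition~\ref{def:delta}. Evaluating at $q=0$, $z=$ (the base point corresponding to $v_j$) leaves exactly $1$, giving property~(1). For property~(2), suppose $v_i$ and $v_j$ are adjacent vertices of $\cb$; here I would reindex and compute $f_{i}$ versus $f_{i'}$ for two such vertices. The point is that $f_i$ as written in Equation~\eqref{eq:f} depends on the reference vertex $v_j$ only through the overall monomial factor $q^{-C_{j}} z^{-v_{j}}$ multiplying the common sum $\sum_{i} (1+\delta_i) q^{C_i} z^{v_i}$; changing the reference vertex from $v_i$ to $v_j$ therefore multiplies the whole function by $q^{C_j - C_i} z^{v_j - v_i}$, which is exactly the claimed relation $f_{\cb,v_i} = q^{C_j - C_i} z^{v_j - v_i} f_{\cb,v_j}$. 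Property~(4) is immediate because the definition of $f_i$ in Equation~\eqref{eq:f} makes no reference to $\cb$ at all, only to the vertex $v_i$; thus if $v \in \cb \cap \cb'$ the same function is assigned, so $f_{\cb,v} = f_{\cb',v}$.

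The main conceptual obstacle, such as it is, will be making the bookkeeping in property~(2) transparent: one must be careful that the ``common sum'' $\sum_{i=1}^m (1+\delta_i(q)) q^{C_i} z^{v_i}$ is genuinely independent of the reference index, so that passing between reference vertices $v_i$ and $v_j$ introduces only the monomial prefactor and nothing else. I would address this by factoring Equation~\eqref{eq:f} explicitly as $f_i = q^{-C_i} z^{-v_i} \cdot S(q,z)$ with $S(q,z) = \sum_{k=1}^m (1+\delta_k(q)) q^{C_k} z^{v_k}$, observing that $S$ does not depend on $i$, and then reading off $f_i / f_j = q^{C_j - C_i} z^{v_j - v_i}$ directly. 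Once this factorization is written down, all three remaining properties follow by inspection, and combined with Proposition~\ref{prop:key} this shows $f_i$ meets all four conditions of Definition~\ref{def:slab}, completing the proof.
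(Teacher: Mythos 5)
Your proposal is correct and follows essentially the same route as the paper: conditions (1), (2), and (4) are read off directly from the explicit form of $f_i$ in Equation \eqref{eq:f} (your factorization $f_i = q^{-C_i}z^{-v_i}S(q,z)$ just makes the paper's one-line verification of (2) explicit), and condition (3) is delegated to Proposition \ref{prop:key} exactly as in the paper.
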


\begin{proof}
For $f_i(q,z) = \sum_{k=1}^m \left(\sum_{\alpha} q^\alpha n_{\beta_k+\alpha}\right) q^{C_k-C_i} z^{v_k-v_i}$, the constant term is obtained from the summand with $k=i$ and $\alpha=0$, which is $n_{\beta_k}=1$.  This proves Condition (1).

From the expression of $f_i$ we see that $ f_{i} = q^{C_j-C_i} z^{v_j-v_i} f_{j}.$  Thus Condition (2) follows.

Since we set $f_{\cb,v_i} = f_i$, $f_{\cb,v}$ does not depend on $\cb$ which is Condition (4).

Condition (3) is proved in Proposition \ref{prop:key}.
\end{proof}

This finishes the proof of Theorem \ref{thm:main}.  From Equation \eqref{eq:X^tg} and \eqref{eq:X^sg}, it follows immediately that the mirror constructed using tropical geometry by Gross-Siebert equals to the mirror constructed using symplectic geometry by Chan-Lau-Leung, which is Corollary \ref{cor:same}.

\section{Examples}

\begin{example}
$X = K_{\bP^1}$.  Let $\sigma$ be the closed interval $[-1,1] \subset \R$, and take the triangulation $\cP$ given by cutting the interval into two pieces $[-1,0]$ and $[0,1]$.  The fan is obtained by coning over the triangulation of $\sigma \times \{1\}$, and the generators of rays of the fan are $v_1 = (1,1),v_2=(0,1)$ and $v_3=(-1,1)$.  All effective curve classes are multiples of the exceptional curve class $C$, which corresponds to the K\"ahler parameter $q=q^C$ and mirror complex parameter $\check{q} = \check{q}^C$.  See Figure \ref{fig:KP1}.

\begin{figure}[htb!]
\begin{center}
\includegraphics{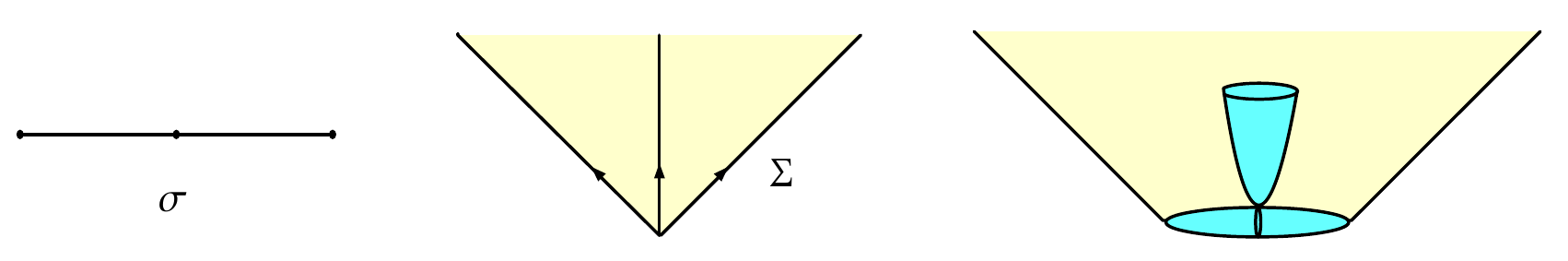}
\caption{The fan (in the middle) and moment-map polytope (on the right) of $K_{\bP^1}$.  Non-trivial open Gromov-Witten invariants come from disc classes of the form $\beta_2+\alpha$ shown on the right.}\label{fig:KP1}
\end{center}
\end{figure}

In the following we will consider the wall-crossing generating function $f_j$ for $j=2$, which is
$$ f_2 = \left(1+\sum_{\alpha \not= 0} n_{\beta_2+\alpha} q^{\alpha} \right) + x + q x^{-1}.$$
In this case it turns out that $\sum_{\alpha \not= 0} n_{\beta_2+\alpha} q^{\alpha} = q$ (which follows from computing the inverse mirror map).

By open mirror theorem and the mirror map $q = \check{q}(q) \exp 2 g(\check{q}(q))$, we have
$$f_2 = \exp g(\check{q}(q)) + x + \check{q}(q) \exp 2 g(\check{q}(q)) x^{-1}$$
where
$$ g(\check{q}) = \sum_{l>0} (-1)^{2l} \frac{(2l-1)!}{(l!)^2} \check{q}^l = \sum_{l>0} \frac{(2l-1)!}{(l!)^2} \check{q}^l$$
and $q = \check{q} \exp (2 g(\check{q}))$ is the mirror map.
We show that it satisfies the normalization condition (3).

\begin{align*}
\log f_2 = \log (\exp g(\check{q}) + x + \check{q} \exp 2 g(\check{q}) x^{-1}) &= g(\check{q}) + \log (1 + \tilde{x} + \check{q} \tilde{x}^{-1}) \\
&= g(\check{q}) + \sum_{k>0} \frac{(-1)^{k-1}}{k} (\tilde{x} + \check{q} \tilde{x}^{-1})^k
\end{align*}
where $\tilde{x} := x \exp (-g(\check{q}))$.  The summands of the second term have constant part (which has no dependence on $x$) only when $k=2l$ for some $l>0$.  Thus the constant part of the above expression is
$$ g(\check{q}) + \sum_{l>0} \frac{(-1)^{2l-1}}{2l} \frac{(2l)!}{(l!)^2} \check{q}^l = 0. $$
Hence the wall-crossing generating function for open Gromov-Witten invariants satisfies Gross-Siebert normalization condition.
\end{example}

\begin{example}
$X = K_{\bP^2}$.  Let $\sigma$ be the lattice polytope in $\R^2$ whose vertices are $(1,0),(0,1)$ and $(-1,-1)$, and take the triangulation $\cP$ given by dividing $\sigma$ into three pieces using the interior lattice point $(0,0)$.  The fan is obtained by coning over the triangulation of $\sigma \times \{1\}$, and the generators of rays of the fan are $v_1 = (1,0,1),v_2=(0,1,1)$, $v_3=(0,0,1)$.  All effective curve classes are multiples of the exceptional curve class $C$ (which is the line class in $\bP^2$), which corresponds to the K\"ahler parameter $q=q^C$ and mirror complex parameter $\check{q} = \check{q}^C$.  See Figure \ref{fig:KP2}.

\begin{figure}[htb!]
\begin{center}
\includegraphics[scale=0.9]{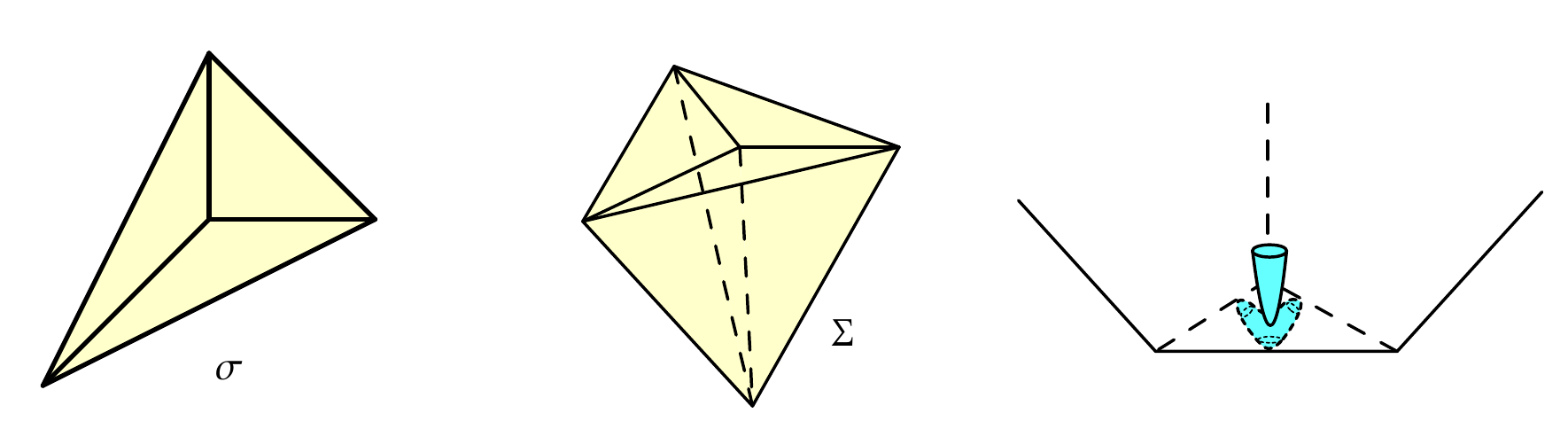}
\caption{The fan (in the middle) and moment-map polytope (on the right) of $K_{\bP^2}$.  Non-trivial open Gromov-Witten invariants come from disc classes of the form $\beta_3+\alpha$ shown on the right.}\label{fig:KP2}
\end{center}
\end{figure}

In the following we will consider the wall-crossing generating function $f_j$ for $j=3$, which is
$$ f_3 = \left(1+\sum_{\alpha \not= 0} n_{\beta_3+\alpha} q^{\alpha} \right) + x + y + q x^{-1}y^{-1}.$$

By open mirror theorem and the mirror map $q = \check{q}(q) \exp 3 g(\check{q}(q))$, we have
$$ f_3 = \exp g(\check{q}(q)) + x + y + \check{q}(q) \exp 3 g(\check{q}(q)) x^{-1} y^{-1}$$
where
$$ g(\check{q}) = \sum_{l>0} (-1)^{3l} \frac{(3l-1)!}{(l!)^3} \check{q}^l $$
and $q = \check{q} \exp (3 g(\check{q}))$ is the mirror map.  In this case $1+\sum_{\alpha \not= 0} n_{\beta_3+\alpha} q^{\alpha}=\exp g(\check{q}(q))$ is an infinite series
$$1 - 2q + 5 q^2 - 32 q^3 + 286 q^4 - 3038 q^5 + 35870 q^6 - \ldots.$$
We show that $f_j$ satisfies the normalization condition (3) in Definition \ref{def:slab}.

\begin{align*}
\log f_3 = \log (\exp g(\check{q}) + x + y + \check{q} \exp 3 g(\check{q}) x^{-1}y^{-1}) &= g(\check{q}) + \log (1 + \tilde{x} + \tilde{y} + \check{q} \tilde{x}^{-1}\tilde{y}^{-1}) \\
&= g(\check{q}) + \sum_{k>0} \frac{(-1)^{k-1}}{k} (\tilde{x} + \tilde{y} + \check{q} \tilde{x}^{-1}\tilde{y}^{-1})^k
\end{align*}
where $\tilde{x} := x \exp (-g(\check{q}))$ and $\tilde{y} := y \exp (-g(\check{q}))$.  The summands of the second term have constant part (which has no dependence on $(x,y)$) only when $k=3l$ for some $l>0$.  Thus the constant part of the above expression is
$$ g(\check{q}) + \sum_{l>0} \frac{(-1)^{3l-1}}{3l} \frac{(3l)!}{(l!)^3} \check{q}^l = 0. $$
Hence the wall-crossing generating function for open Gromov-Witten invariants satisfies Gross-Siebert normalization condition.
\end{example}

\bibliographystyle{amsalpha}
\bibliography{geometry}

\end{document}